\documentclass[11pt,a4paper]{article}
\usepackage[pdftex]{graphicx}
\usepackage{amsmath,amssymb,amsfonts,amsthm}
\numberwithin{equation}{section}
\usepackage{indentfirst}
\usepackage{enumitem} 
\usepackage[margin=1.3in]{geometry}
\usepackage{fancyhdr}

\usepackage[colorlinks=true,linkcolor=magenta,citecolor=blue,urlcolor=cyan]{hyperref}
\usepackage{titlesec}
\usepackage{etoolbox}
\usepackage{graphicx}  
\usepackage{changepage}
\theoremstyle{plain}
\newtheorem{theorem}{Theorem}[section]
\newtheorem{lemma}[theorem]{Lemma}
\newtheorem{corollary}[theorem]{Corollary}

\usepackage{comment}

\makeatletter
\renewcommand{\maketitle}{
	\begin{center}
		{\Large\bfseries{\@title}\par}
		\vskip 1em
		{\normalsize
			\lineskip .5em
			\begin{tabular}[t]{c}
				\@author
			\end{tabular}\par}
		\vskip 1.5em
	\end{center}
}
\makeatother

\usepackage{color}
\usepackage{xcolor}
\usepackage[normalem]{ulem} 
\usepackage{soul}

\usepackage{graphicx} 
\usepackage{xstring}  

\usepackage{xstring}
\usepackage{graphicx} 

\usepackage{marvosym}

\begin{document}
	\begin{center}{\bf \large
			{Arithmetic Properties for $k$-Color Analogue of Simultaneously $s$-Regular and $t$-Distinct Partitions\\[0.15cm] }  }\vspace{0.2cm}
	\end{center}

	\begin{center}
		\footnotemark[1]
		\bf Anjelin Mariya Johnson, \footnotemark[2]
		\bf S. N. Fathima  

	\end{center}
	\vspace{1 cm}
	\begin{center}
		\begin{minipage}{0.85\textwidth}  
			
			\noindent{\bf Abstract:} 
			 In this article, we discuss general generating functions for partitions of $n$, simultaneously $s$-regular and $t$-distinct in 3-colors. In addition, we obtain infinite families of congruences modulo powers of 3 for specific values of $(\ell,t)$. For instance, for positive integers $n$ and $k$, we have
				\begin{align*}
				\sum_{n=o}^{\infty}RD_3^{3,3}\left(3^kn+\frac{3^k+1}{2}\right)q^n\equiv0 \pmod{3^{k+1}}.
			\end{align*}
			\\
			\vspace{.125cm}
			
			\noindent {\bf \small Keywords} : Generating Function, Partitions, Congruences, $q$-series.\\
			\vspace{.05cm}
			
			\noindent {\bf \small Mathematical Subject Classification (2020)} :  05A15, 05A17, 11P81, 11P83.
		\end{minipage}
	\end{center}
	
	\section {Introduction}
	In his 2017 paper \cite{keith}, William Keith introduced the partition function $RD^{\ell,t}$, which is simultaneously $s$-regular and $t$-distinct partition function. 
	A partition of positive integer $n$ is said to be simultaneously $l$-regular and $t$-distinct if none of the parts are divisible by $l$ and parts appear fewer than $t$ times. Then the generating function for $RD^{\ell,t}$ (See, {{\cite[Th.3]{keith}}}) reads as follows:
	\begin{align*}
		\sum_{n=0}^{\infty} RD^{\ell,t}(n)q^n=
		\frac{E_{\ell}E_t}{E_1E_{\ell t}},
	\end{align*}
	where here and in the sequel, we use the standard $q$-series notation:
 For r$\geq$1, 
	\begin{align*}
		E_r:= \prod_{n=1}^{\infty}(1-q^{nr}), \;\; |q|<1.
	\end{align*}
	For example, $RD^{3,3}(5)=3$, with the corresponding partitions
	\begin{align*}
		(5),\;(4,1),\;(2,2,1).
	\end{align*}
	The OEIS sequence A112194, corresponds to the enumerated function $RD^{3,3}(n)$. Numerous congruence properties are known for the function  $RD^{\ell,t}$. For example, Nadji and Ahmia \cite{Ahmia 1 } proved that:
	For all $n \geq 0$,
	\begin{align*}
		RD_{(4,9)}(4n+3) &\equiv 0 \pmod{3},\\
		RD_{(4,9)}(6n+3) &\equiv 0 \pmod{3},\\
		RD_{(4,9)}(6n+5) &\equiv 0 \pmod{6},\\
		RD_{(4,9)}(8n+5) &\equiv 0 \pmod{6}.
		\end{align*}
	For more details on $RD^{\ell,t}(n)$, we refer the readers to \cite{keith2,keith,Ahmia 2}.\\
	In this paper, the author initiated a study of the $k$-colored analogue of $RD^{\ell,t}$.
	A partition of a positive integer  is said to be $k$-colored and simultaneously $l$-regular and  $t$-distinct if none of the parts are divisible by $l$, parts appear fewer than $t$ times and appears in $k$ different colors. If $RD_k^{\ell,t}$ denotes such partitions, then its generating function is given by
	\begin{align}\label{Gf}
		\sum_{n=0}^{\infty} RD_k^{\ell,t}(n)q^n=\left(\frac{E_{\ell}E_t}{E_1E_{\ell t}}\right)^k.
	\end{align}
For example, $RD_3^{3,3}(2)=9$, with the relevant partitions being
\begin{align*}
(2_1),(2_2),(2_3),(1_1,1_1),(1_2,1_2),(1_3,1_3),(1_1,1_2),(1_1,1_3),(1_2,1_3).
\end{align*}

The  aim of this paper is to establish several new infinite families of congruence modulo  powers of 3 for $RD_3^{\ell,t}$. Precisely, we focus on the case $(\ell,t)$=(3,3) and (3,27).
 The following are our main results:
	\begin{theorem}\label{T1}
		For integers $k \geq 1$ and $n \geq 0$, we have
		\begin{align*}
			\sum_{n=0}^{\infty}RD_3^{3,3}\left(3^kn+\frac{3^k+1}{2}\right)q^n=\sum_{i=1}^{\frac{3^{k-1}+1}{2}}x_{k,i}q^{i-1}\frac{E_3^{3(4i-2)}}{E_1^{3(4i-2)}}.
		\end{align*}
		where the coefficient vectors
		$	x_{k}=(x_{k,1},\,x_{k,2},\,\ldots)$ are given by
		\begin{align}
			x_1 =(x_{1,1},\,x_{1,2},\,x_{1,3},\,x_{1,4},\,\ldots)
			:= (9,\,0,\,0,\,0,\,\ldots),
			\label{x1}
		\end{align}
		and for $k\geq1$,	
	\begin{align*}
			x_{k+1}=x_k\cdot A,
	\end{align*}
		where 
		$	A:=(a_{i,j})_{i,j\geq1}$ is defined by
	\begin{align}
			\,\,\,\,\,\,\,\,a_{i,j}=m_{4i-2,i+j-1},\label{A}
				\end{align}
				\begin{align*}
					M:=(m_{i,j})_{i,j\ge1}=
	\left(
	\begin{array}{cccccccccc}
		9 & 0 & 0 & 0 & 0 & 0 & 0 & 0 & 0 & \cdots\\
		6 & 243 & 0 & 0 & 0 & 0 & 0 & 0 & 0 & \cdots\\
		1 & 243 & 6561 & 0 & 0 & 0 & 0 & 0 & 0 & \cdots\\
		\vdots & \vdots & \vdots & \vdots & \vdots & \vdots &
		\vdots & \vdots & \vdots & \ddots
	\end{array}
	\right).
		\end{align*}
		\end{theorem}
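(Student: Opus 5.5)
The plan is to argue by induction on $k$, reducing everything to a single ``$U_3$-type'' lemma for the powers $q^{i-1}(E_3/E_1)^{3(4i-2)}$, whose coefficients are read off from the matrix $M$. By \eqref{Gf} with $(\ell,t,k)=(3,3,3)$ we have
\begin{align*}
\sum_{n\ge0}RD_3^{3,3}(n)q^n=\frac{E_3^6}{E_1^3E_9^3}.
\end{align*}

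\textbf{Base case $k=1$.} I would use the classical $3$-dissection of $1/E_1^3$ (Hirschhorn's form, via the cubic theta function $a(q)$):
\begin{align*}
\frac{1}{E_1^3}=\frac{E_9^3}{E_3^{10}}\Bigl(a(q^3)^2+3q\,a(q^3)E_9^3+9q^2E_9^6\Bigr).
\end{align*}
Multiplying by $E_3^6/E_9^3$, the factor $E_9^3$ cancels and the terms with exponents $\equiv 2 \pmod 3$ come only from $9q^2E_9^6$. Extracting these and replacing $q^3\mapsto q$ gives
\begin{align*}
\sum_{n\ge0} RD_3^{3,3}(3n+2)q^n=9\,\frac{E_3^6}{E_1^6}.
\end{align*}
If the constant or the exponents differ slightly from what I expect, I will recheck them against the stated $x_1=(9,0,0,\dots)$ and the $i=1$ term $(E_3/E_1)^{6}$, and fix the normalization of the dissection accordingly. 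Since $(3+1)/2=2$, this is exactly the $k=1$ statement.

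\textbf{Key lemma.} Let $H$ be the operator that picks out the terms $q^{3n+r}$ (with $r$ fixed by the parity bookkeeping of $(3^{k+1}+1)/2$) and then sends $q^3\mapsto q$. I would prove that for every $i\ge1$,
\begin{align*}
H\!\left(q^{i-1}\frac{E_3^{3(4i-2)}}{E_1^{3(4i-2)}}\right)=\sum_{j\ge1} m_{4i-2,\,i+j-1}\,q^{j-1}\frac{E_3^{3(4j-2)}}{E_1^{3(4j-2)}}.
\end{align*}
Summing this over $i$ with weights $x_{k,i}$ gives $x_{k+1}=x_kA$ with $a_{i,j}=m_{4i-2,i+j-1}$. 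The support bound $i\le(3^{k-1}+1)/2$ propagates because $M$ is lower triangular.

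To prove the lemma, I would write $E_3^3/E_1^3$ through the same dissection, as $(E_9^3/E_3^{\,\cdot})$ times a quadratic in $\zeta=q\,E_9^3/a(q^3)$. Powers of this quantity then satisfy a cubic modular relation. The fundamental relation should give, for the sequence $m_{i,j}$ defined by $H$ applied to the $i$-th power, the recurrence
\begin{align*}
m_{i,j}=27\,m_{i-1,j-1}+9\,m_{i-2,j-1}+m_{i-3,j-1},
\end{align*}
with initial rows $9;\ 6,243;\ 1,243,6561$. As a check, $m_{3,2}=27\cdot6+9\cdot9=243$ and $m_{3,3}=27\cdot243=6561$, both consistent with the displayed $M$.

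\textbf{Main obstacle.} The key step is pinning down this fundamental cubic relation and the three initial rows of $M$ by explicit $3$-dissections. I would compute the first three rows directly, and then derive the recurrence from the relation satisfied by the three conjugates $\zeta,\omega\zeta,\omega^2\zeta$ through Newton's identities, in the style of Hirschhorn--Sellers. The induction itself is then routine bookkeeping.
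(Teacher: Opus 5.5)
Your plan is essentially the paper's proof. It is induction on $k$ via Hirschhorn's $H_3$ operator, and your key lemma (with $r=1$) is the paper's lemma $H_3\bigl(q^{i+1}(E_3/E_1)^{12i-6}\bigr)=\sum_{j=1}^{3i-1}a_{i,j}\,q^{3j}(E_9/E_3)^{12j-6}$ in a different normalization. $M$ is generated by the same recurrence $m_{i,j}=27m_{i-1,j-1}+9m_{i-2,j-1}+m_{i-3,j-1}$, which comes from the cubic $Tw^3=27w^2+9w+1$ for $w=qE_9^3/E_1^3$, $T=E_3^{12}/(q^3E_9^{12})$; this $w$ and its conjugates $w(\omega q),w(\omega^2 q)$ are what you feed into Newton's identities.

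One slip in your base case: the correct dissection is $\frac{1}{E_1^3}=\frac{E_9^3}{E_3^{10}}\bigl(a(q^3)^2+3q\,a(q^3)\frac{E_9^3}{E_3}+9q^2\frac{E_9^6}{E_3^2}\bigr)$. Your version would give $9E_3^6/E_1^4$. The corrected identity, or $H_3(w)=9/T$ applied to $q\sum_n RD_3^{3,3}(n)q^n=w\,E_3^6/E_9^6$, gives $9E_3^6/E_1^6$ as you claim.
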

	Setting $k=1$ and $k=2$ in Theorem \ref{T1}, we obtain
	\begin{equation}\label{I1.2}
	\sum_{n=0}^{\infty}RD_3^{3,3}\left(3n+2\right)q^n=9\frac{E_3^{6}}{E_1^{6}},
	\end{equation}
	
	and
	
	\begin{equation}\label{I1.3}
			\sum_{n=0}^{\infty}RD_3^{3,3}\left(9n+5\right)q^n=
		54\frac{E_3^6}{E_1^6}
		+2187q\frac{E_3^{18}}{E_1^{18}},
		\end{equation}
	respectively.\\
	It is noteworthy that (\ref*{I1.2}) and (\ref*{I1.3}) are analogous to the following two Ramanujan's most beautiful identities \cite[p.~239, p.~243]{Ramanujan}:
	
	\begin{equation*}
		\sum_{n\ge0} p(5n+4)q^n
		=
		5\frac{E_5^5}{E_1^6},
	\end{equation*}
	
	and
	
	\begin{equation*}
		\sum_{n\ge0} p(7n+5)q^n
		=
		7\frac{E_7^3}{E_1^4}
		+49q\frac{E_7^7}{E_1^8},
	\end{equation*}
	respectively.\\
	Furthermore, from Theorem \ref{T1}, we obtain the following corollary: 
	\begin{corollary}\label{coro1}
			For integers $k \geq 1$ and $n \geq 0$, we have
		\begin{align}
				\sum_{n=0}^{\infty}RD_3^{3,3}\left(3^kn+\frac{3^k+1}{2}\right)q^n\equiv0 \pmod{3^{k+1}}.\label{cor1}
		\end{align}
	\end{corollary}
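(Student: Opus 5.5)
Corollary~\ref{coro1} follows from Theorem~\ref{T1} once we know a divisibility property of the coefficient vectors $x_k$. Since each $E_3^{3(4i-2)}/E_1^{3(4i-2)}$ is a power series with integer coefficients, it suffices to show that every entry satisfies $3^{k+1}\mid x_{k,i}$ for all $i\ge1$. I would prove this by induction on $k$, using the recursion $x_{k+1}=x_k A$ together with a $3$-adic lower bound on the entries of $A$.

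\textbf{Divisibility of $M$.} First I would extract from the construction of $M$ (the matrix arising from the modular equation for the $3$-dissection of $E_3^3/E_1^3$, presumably via $\xi=E_1/(qE_9^3)$ type identities) the bound
\begin{align*}
\nu_3(m_{i,j}) \ge \left\lfloor \frac{9j-3i-1}{2}\right\rfloor ,
\end{align*}
or some comparable bound. The visible entries agree with this: $m_{1,1}=9$, $m_{2,1}=6$, $m_{3,1}=1$, $m_{2,2}=243$, $m_{3,3}=6561$. What is actually needed is weaker: the entries $a_{i,j}=m_{4i-2,i+j-1}$ should all be divisible by $3$. With $i'=4i-2$ and $j'=i+j-1$, the bound gives $9j'-3i'=9i+9j-9-12i+6=9j-3i-3$. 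This is not automatically positive, but $m_{i',j'}=0$ unless $j'\le i'$, and I would expect the true recursion for $M$ (the standard $m_{i,j}=27m_{i-1,j-1}+9m_{i-2,j-1}+m_{i-3,j-1}$ type relation, from $\xi^3$ modular equation with coefficients $27,9,1$ up to normalization) to give $\nu_3(m_{i,j})\ge \lfloor (3j-i)/2\rfloor$-like growth. The plan is to establish the recursion for $m_{i,j}$ from the definition of $M$ in the paper and then prove the $3$-adic bound by induction on $i$.

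\textbf{Induction on $k$.} For the base case, $x_1=(9,0,0,\dots)$, so $3^2\mid x_{1,i}$. For the inductive step, $x_{k+1,j}=\sum_i x_{k,i}a_{i,j}$. If $3^{k+1}\mid x_{k,i}$ for all $i$ and $3\mid a_{i,j}$ for all $i,j$, then $3^{k+2}\mid x_{k+1,j}$. Each sum is finite because $x_{k,i}=0$ for $i>(3^{k-1}+1)/2$.

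\textbf{Main obstacle.} The only real difficulty is proving $3\mid a_{i,j}$ uniformly, i.e.\ $3\mid m_{4i-2,j'}$ for every $j'\ge i$. The small cases are consistent with this: $m_{2,1}=6$ and $m_{2,2}=243$, while $m_{3,1}=1$ is never used because $4i-2\neq3$. I would attempt the inductive valuation bound on $M$ from its recursion. If the general bound is awkward, a parity-type argument would also suffice: the first-column entries with row index $\equiv 2\pmod 4$ vanish mod $3$, and higher columns carry extra factors of $27$.
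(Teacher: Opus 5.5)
You set the proof up the same way the paper does: integrality of $E_3^{3(4i-2)}/E_1^{3(4i-2)}$, then induction on $k$ through $x_{k+1}=x_kA$. However, the lemma your induction rests on, $3\mid a_{i,j}$ for all $i,j$, is false, so the inductive step fails.

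Already $a_{2,1}=m_{6,2}=1$ (row $6$, column $2$ of the displayed $M$). In general, for $i\ge 4$ the recursion for $H_3(\zeta^{-i})$ gives $m_{i,1}=0$ and $m_{i,j}=27m_{i-1,j-1}+9m_{i-2,j-1}+m_{i-3,j-1}$. Hence $m_{3r,r}=m_{3r-3,r-1}=\cdots=m_{3,1}=1$. Since $4i-2=3(i+j-1)$ exactly when $i=3j-1$, this gives $a_{3j-1,j}=1$ for every $j$.

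Your fallback (\emph{higher columns carry extra factors of $27$}) fails for the same reason: the lowest nonzero entry of every column of $M$ is $1$. Your guessed bound of the shape $\lfloor(3j'-i')/2\rfloor$, evaluated at $(i',j')=(4i-2,i+j-1)$, is $\lfloor(3j-i-1)/2\rfloor$, which is $\le 0$ for $i\ge 3j-1$.

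Concretely, the uniform hypothesis $3^{k+1}\mid x_{k,i}$ does not propagate. We have $x_{3,1}=x_{2,1}a_{1,1}+x_{2,2}a_{2,1}=54\cdot 6+3^{7}\cdot 1$. From $3^{3}\mid x_{2,2}$ alone you would only get $3^{3}\mid x_{3,1}$, not $3^{4}$. The congruence survives only because $x_{2,2}$ is much more divisible than $3^3$.

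The missing idea is a bound on $\nu(x_{k,i})$ that grows with $i$. The extra divisibility of the later coordinates of $x_k$ then pays for the entries $a_{i,j}$ (some equal to $1$) with $i$ large relative to $j$. This is what the paper does.

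\begin{itemize}
\item Start from $\nu(m_{i,j})\ge\lfloor(9j-3i-1)/2\rfloor$, which you can prove by induction on $i$ from the recursion above after checking rows $1$--$3$.
\item This gives $\nu(a_{i,j})\ge\lfloor(9j-3i-4)/2\rfloor$.
\item Prove $\nu(x_{k,i})\ge k+1+\lfloor(9i-9)/2\rfloor$ by induction on $k$.
\end{itemize}

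In the inductive step, $\nu(x_{k,i})+\nu(a_{i,j})$ is at least $k+1+\lfloor(9j-7)/2\rfloor=k+2+\lfloor(9j-9)/2\rfloor$ when $i=1$. When $i\ge 2$ it is at least $k+1+(9j-3)/2$, because the gain of about $9i/2$ outweighs the loss of about $3i/2$. This is the $k+1$ case. Dropping the $i$-dependent term then gives $3^{k+1}\mid x_{k,i}$, and hence the corollary.
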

	The result (\ref{cor1}) is analogous to Ramanujan's congruence modulo powers of $5$~\cite{Ramanujan}, for $k,n \geq 0$, 
	\begin{align*}
			p(5^kn+\delta_{5,k})\equiv&\; 0 \pmod{5^k},
	\end{align*}
where $24\delta_{5,k}\equiv1\pmod{5^k}$.
\begin{theorem}For integers $n \geq 0$, we have \label{t3}
	\begin{align} 
		\sum_{n=0}^{\infty}RD_3^{3,27}\left(3n+2\right)q^n=9\left(\frac{E_3}{E_1}\right)^9\left(\frac{E_9}{E_{27}}\right)^3, \label{T3.1}
	\end{align}
	and 
	\begin{align}
		\sum_{n=0}^{\infty}RD_3^{3,27}\left(9n+2\right)q^n=\sum_{i=1}^{3}y_{2,i}q^{i-1}\frac{E_3^{3(4i-2)}}{E_9^{3(4i-2)}}\left(\frac{E_9^3}{E_1^3}\right)^{4i-3}, \label{T3.2}
		\end{align}
			where
			\begin{align*}
					y_2 := (3^2,\,3^7,\,3^{10},\,0,\,0\,\ldots).
			\end{align*}
		\end{theorem}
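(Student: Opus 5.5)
We start from \eqref{Gf} with $k=3$ and $(\ell,t)=(3,27)$:
\begin{align*}
\sum_{n=0}^{\infty}RD_3^{3,27}(n)q^n=\frac{E_3^3E_{27}^3}{E_{81}^3}\cdot\frac{1}{E_1^3}.
\end{align*}
The first factor is already a series in $q^3$, so both parts of the theorem reduce to $3$-dissections of powers of $1/E_1^3$. We use the classical $3$-dissection written in terms of the cubic theta function $a(q)$:
\begin{align*}
\frac{1}{E_1^3}=\frac{E_9^3}{E_3^{12}}\left(a(q^3)^2+3q\,a(q^3)E_9^3+9q^2E_9^6\right).
\end{align*}
This is the same identity that underlies \eqref{I1.2}, since in the case $(3,3)$ it reproduces $9E_3^6/E_1^6$. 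Writing $A:=a(q^3)/E_9^3$, it reads $1/E_1^3=\frac{E_9^9}{E_3^{12}}(A^2+3qA+9q^2)$.

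For \eqref{T3.1}, we keep the terms $q^{3n+2}$. Only $9q^2E_9^9/E_3^{12}$ contributes. We multiply by $E_3^3E_{27}^3/E_{81}^3$, divide by $q^2$ and replace $q^3$ by $q$. This gives
\begin{align*}
9\frac{E_3^9}{E_1^{12}}\cdot\frac{E_1^3E_9^3}{E_{27}^3}=9\left(\frac{E_3}{E_1}\right)^9\left(\frac{E_9}{E_{27}}\right)^3,
\end{align*}
which is \eqref{T3.1}.

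For \eqref{T3.2}, we take \eqref{T3.1}. The factor $9E_3^9E_9^3/E_{27}^3$ is a function of $q^3$, so we need the terms $q^{3n}$ of $1/E_1^9$. We cube the dissection above:
\begin{align*}
\frac{1}{E_1^9}=\frac{E_9^{27}}{E_3^{36}}\left(A^2+3qA+9q^2\right)^3.
\end{align*}
Expanding the cube, we collect the monomials $q^jA^m$ with $j\equiv0\pmod 3$. These involve $A^6$, $q^3A^3$ and $q^6$, with numerical coefficients that are powers of $3$ times small integers. We then eliminate $A^3$ using
\begin{align*}
a(q)^3=\frac{E_1^9}{E_3^3}+27q\frac{E_3^9}{E_1^3}.
\end{align*}
With $q\to q^3$ this gives $A^3=E_9^{-9}\left(E_3^9/E_9^3+27q^3E_9^9/E_3^3\right)$. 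Substituting yields a polynomial in $q^3$ whose coefficients are quotients of $E_3$ and $E_9$. We then multiply by $9E_3^9E_9^3/E_{27}^3$ and replace $q^3$ by $q$.

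The last step is to regroup the result as
\begin{align*}
\sum_{i=1}^{3}y_{2,i}\,q^{i-1}\frac{E_3^{3(4i-2)}}{E_9^{3(4i-2)}}\left(\frac{E_9^3}{E_1^3}\right)^{4i-3}.
\end{align*}
We expect exactly three terms, for $q^0,q^1,q^2$. We then read off $y_2=(3^2,3^7,3^{10})$ by comparing coefficients. This regrouping is the main obstacle. The cube produces several cross terms, and we must check that they combine into exactly the eta-quotients $(E_3/E_9)^{6},(E_3/E_9)^{18},(E_3/E_9)^{30}$ with the stated powers of $E_9^3/E_1^3$, and that no stray terms survive. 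We would carry this out by tracking the exponents of $E_1,E_3,E_9,E_{27}$ separately for each power of $q$. As a safeguard, we would compare the first few coefficients numerically with the expansion of the generating function.
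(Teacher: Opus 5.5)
Your plan (3-dissect $1/E_1^3$, cube, keep the $q^{3n}$ terms, eliminate with the cubic relation) is viable and is a hands-on version of the paper's argument. However, the key displayed identity is wrong, and the step that carries all the content of \eqref{T3.2} is left undone.

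Since $E_1^3=a(q^3)E_3-3qE_9^3$ (the paper's $P(q)$ equals $a(q)E_1$), the correct dissection is
\begin{align*}
\frac{1}{E_1^3}=\frac{E_9^3}{E_3^{12}}\left(a(q^3)^2E_3^2+3q\,a(q^3)E_3E_9^3+9q^2E_9^6\right).
\end{align*}
Your version drops the factors $E_3^2$ and $E_3$: its coefficient of $q^3$ is $24$, whereas $1/E_1^3$ has $22$. Your sanity check against \eqref{I1.2} only probes the $9q^2E_9^6$ term, so it cannot detect this. The slip is therefore harmless for \eqref{T3.1}, which uses only that term.

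It does break \eqref{T3.2}. With $A=a(q^3)/E_9^3$ you get $A^3=E_3^9/E_9^{12}+27q^3/E_3^3$. In $A^6+189q^3A^3+729q^6$ the two $q^3$-terms then carry $E_3^6$ and $E_3^9$, and the three $q^6$-terms carry $E_3^{-6}$, $E_3^{-3}$, $E_3^{0}$. These cannot merge, so the stray terms you worry about genuinely survive. Separately, ``reading off $y_2$'' plus a numerical check is not a proof; that regrouping is the entire content of \eqref{T3.2}.

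The repair is short. Put $B:=a(q^3)E_3/E_9^3=P(q^3)/E_9^3$, which is $q(\zeta+3)$ by \eqref{w2}, and $W:=E_3^{12}/E_9^{12}$. Then $1/E_1^3=\frac{E_9^9}{E_3^{12}}(B^2+3qB+9q^2)$ and $B^3=W+27q^3$. Hence the $q^{3n}$-part of $(B^2+3qB+9q^2)^3$ is
\begin{align*}
B^6+189q^3B^3+729q^6=W^2+3^5q^3W+3^8q^6 .
\end{align*}
Multiply by $\frac{E_9^{27}}{E_3^{36}}\cdot 9\frac{E_3^9E_9^3}{E_{27}^3}$ and replace $q^3$ by $q$. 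This gives exactly
\begin{align*}
3^2\frac{E_3^6}{E_1^3E_9^3}+3^7q\frac{E_3^{18}}{E_1^{15}E_9^3}+3^{10}q^2\frac{E_3^{30}}{E_1^{27}E_9^3},
\end{align*}
which is \eqref{T3.2}.

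The numbers $1,3^5,3^8$ are $m_{3,1},m_{3,2},m_{3,3}$. The paper obtains them as $H_3(\zeta^{-3})=1/T+3^5/T^2+3^8/T^3$ from the recurrence built on $\zeta^3+9\zeta^2+27\zeta=T$, and $H_3(\zeta^{-1})=9/T$ gives \eqref{T3.1}. Your corrected direct expansion is a perfectly good, more elementary substitute for these two small cases. The paper's recurrence is what scales to the whole matrix $M$ used in Theorems \ref{T1} and \ref{T5}.
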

		Similarly, from Theorem \ref{t3} we obtain the following corollary
\begin{corollary}
	For integers $n \geq 0$, we have
	\begin{align*}
		\sum_{n=0}^{\infty}RD_3^{3,27}\left(3n+2\right)q^n\equiv&0\pmod{3^2}.
			\end{align*}
\end{corollary}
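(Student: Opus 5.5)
The last statement is the corollary $\sum RD_3^{3,27}(3n+2)q^n\equiv 0\pmod{3^2}$. It follows at once from identity \eqref{T3.1} of Theorem \ref{t3}, so the plan is to read off divisibility of the coefficients.

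By \eqref{T3.1},
\begin{align*}
\sum_{n=0}^{\infty}RD_3^{3,27}(3n+2)q^n = 9\left(\frac{E_3}{E_1}\right)^9\left(\frac{E_9}{E_{27}}\right)^3 .
\end{align*}
The only point to check is that the factor multiplying $9$ has integer coefficients when expanded as a power series in $q$. Each $E_r=\prod_{n\ge1}(1-q^{nr})$ has integer coefficients and constant term $1$. Hence $1/E_1$ and $1/E_{27}$ are also power series with integer coefficients. For example, $1/E_1=\sum_{n\ge0} p(n)q^n$, and $1/E_{27}$ is obtained from it by substituting $q\mapsto q^{27}$.

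Therefore $(E_3/E_1)^9(E_9/E_{27})^3\in\mathbb{Z}[[q]]$. Comparing coefficients of $q^n$ on both sides shows that $RD_3^{3,27}(3n+2)$ equals $9$ times an integer for every $n\ge0$, which is the stated congruence modulo $3^2$.

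There is essentially no obstacle, since all the work lies in establishing \eqref{T3.1}. For completeness, that identity would come from the following steps:
\begin{itemize}
\item Write the generating function \eqref{Gf} with $(\ell,t)=(3,27)$ as $(E_3E_{27}/(E_1E_{81}))^3$.
\item Use the standard $3$-dissection of $E_1^3$ (or of $E_3^3/E_1$) to isolate the terms with exponents $\equiv 2 \pmod 3$.
\item Replace $q^3$ by $q$.
\end{itemize}
That derivation is assumed here, since Theorem \ref{t3} precedes the corollary.
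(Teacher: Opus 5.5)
Your proof is correct and follows the paper's route: the paper also reads the corollary directly off the factor $9$ in \eqref{T3.1}. Your observation that $(E_3/E_1)^9(E_9/E_{27})^3\in\mathbb{Z}[[q]]$, because each $E_r$ has integer coefficients and constant term $1$, makes explicit the integrality step the paper leaves implicit.
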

\begin{theorem}\label{T5}
	For integers $k \geq 3$ and $n \geq 0$, we have
	\begin{align*}
		\sum_{n=0}^{\infty}RD_3^{3,27}\left(3^kn+\frac{3^k+13}{2}\right)q^n=\sum_{i=1}^{\frac{3^{k-3}\cdot13+1}{2}}y_{k,i}q^{i-1}\frac{E_3^{3(4i-2)}}{E_1^{3(4i-2)}}
	\end{align*}
	where
	\begin{align*}
		y_1
		:=& (9,\,0,\,0,\,0,\,\ldots),\\
		y_2 :=& (3^2,\,3^7,\,3^{10},\,0,\,0\,\ldots),\\
		y_3:=&y_2\cdot B, 
	\end{align*}
	$ where \;\;B:=(b_{i,j})_{i,j\geq1}$ is defined by
		\begin{align*}
			\,\,\,\,\,\,\,\,b_{i,j}=m_{4i-3,i+j-1},
		\end{align*}
	and for $k\geq3$,
	\begin{align*}
		y_{k+1}=y_k\cdot A,
	\end{align*}
	where A is defined as in (\ref{A}).
\end{theorem}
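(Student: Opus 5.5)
The plan is an induction on $k$ that starts from the identity \eqref{T3.2} of Theorem \ref{t3} and applies the same $3$-dissection machinery that produces the matrix $M$ in Theorem \ref{T1}. The basic tool is a \emph{fundamental lemma} for the operator $U$ that extracts terms of a prescribed residue class mod $3$. I expect it to be established in the proof of Theorem \ref{T1}, and I would assume it here in the following form. For every $i\ge 1$,
\begin{align*}
U\!\left(\left(\frac{E_9^3}{E_1^3}\right)^{i}\right)=\sum_{j\ge1} m_{i,j}\,q^{\,j-1}\,\frac{E_3^{3(4j-\ast)}}{E_1^{3(4j-\ast)}}\,(\text{suitable power of } E_3/E_9),
\end{align*}
with the $q$-exponents divided by $3$ after extraction, and with the entries $m_{i,j}$ generated row by row by a three-term recurrence. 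That recurrence comes from the modular equation satisfied by $\zeta=E_1^3/(qE_9^3)$, namely the $3$-dissection $E_1^3=E_3\,a(q^3)-3qE_9^3$. It yields a cubic relation for $\zeta$ over functions of $q^3$, so that the powers $\zeta^{-i}$ reduce recursively.

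\textbf{Base step $k=3$.} Write each summand of \eqref{T3.2} as
\[
y_{2,i}\,q^{i-1}\frac{E_3^{3(4i-2)}}{E_9^{3(4i-2)}}\left(\frac{E_9^3}{E_1^3}\right)^{4i-3}.
\]
The prefactor $E_3^{\cdot}/E_9^{\cdot}$ is a function of $q^3$, so only $q^{i-1}(E_9^3/E_1^3)^{4i-3}$ has to be dissected. Since $27n+20=9(3n+2)+2$, I extract the terms $q^{3n+2}$ and replace $q^3$ by $q$. The fundamental lemma with row index $4i-3$ gives coefficients $m_{4i-3,j}$, and the factor $q^{i-1}$ shifts the column index to $i+j-1$. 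The $E_3/E_9$ prefactor becomes $E_1/E_3$, and it combines with the output of the lemma into exactly $E_3^{3(4j-2)}/E_1^{3(4j-2)}$. This produces $y_3=y_2\cdot B$ with $b_{i,j}=m_{4i-3,i+j-1}$. The number of nonzero entries is $(13+1)/2=7$, which I would check from the triangular shape of $M$: row $r$ has nonzero entries only in columns $\lceil r/4\rceil,\dots,r$ or similar.

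\textbf{Inductive step $k\to k+1$ for $k\ge3$.} Since $\tfrac{3^{k+1}+13}{2}-\tfrac{3^k+13}{2}=3^k$, passing from $k$ to $k+1$ means extracting the terms $q^{3n+1}$ from the formula for level $k$. Each term has the form
\[
q^{i-1}\frac{E_3^{3(4i-2)}}{E_1^{3(4i-2)}}=q^{i-1}\frac{E_3^{3(4i-2)}}{E_9^{3(4i-2)}}\left(\frac{E_9^3}{E_1^3}\right)^{4i-2}.
\]
The same argument as in the base step, now using row $4i-2$ of $M$, gives $y_{k+1}=y_k\cdot A$ with $A$ as in \eqref{A}. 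This is verbatim the inductive step of Theorem \ref{T1}. The new upper summation limit follows by tracking the largest nonzero column: $i+j-1$ with $j\le 4i-2$, which gives roughly $3L$ from length $L$. I would check that this yields $\tfrac{3^{k-2}\cdot13+1}{2}$.

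\textbf{Main obstacle.} The substantive work is the fundamental lemma itself, including the bookkeeping that fixes which residue class ($0$, $1$ or $2$) each row contributes to and shows that the exponents recombine exactly into $E_3^{3(4j-2)}/E_1^{3(4j-2)}$ after the shift. I would first verify the lemma for rows $1,2,3$ against the displayed entries $9;\,6,243;\,1,243,6561$ of $M$. I would then prove the recurrence $m_{i,j}=9m_{i-1,j-1}+\dots$ coming from the cubic equation for $\zeta$. Once the lemma is fixed, both steps above are mechanical.
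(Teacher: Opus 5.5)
Your proposal is correct and is essentially the paper's own proof. The paper likewise obtains the $k=3$ case from \eqref{T3.2} by extracting the $q^{3n+2}$ terms through rows $4i-3$ of $M$ (giving $y_3=y_2B$), and then inducts by extracting the $q^{3n+1}$ terms through rows $4i-2$, which is its identity \eqref{L12}; your ``fundamental lemma'' is exactly the expansion $H_3(\zeta^{-i})=\sum_j m_{i,j}T^{-j}$, i.e.\ $H_3\bigl(q^iE_9^{3i}/E_1^{3i}\bigr)=\sum_j m_{i,j}\,q^{3j}E_9^{12j}/E_3^{12j}$, established in the Preliminaries. Two small slips do not affect the argument: the recurrence coming from $\zeta^3+9\zeta^2+27\zeta=T$ is $m_{i,j}=27m_{i-1,j-1}+9m_{i-2,j-1}+m_{i-3,j-1}$ for $i\ge4$, not $9m_{i-1,j-1}+\cdots$; and row $r$ of $M$ is supported on $\lceil r/3\rceil\le j\le r$, so your $\lceil r/4\rceil$ bound is true and already sufficient.
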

\begin{corollary}\label{coro3}
		For integers $k \geq 3$ and $n \geq 0$, we have
	\begin{align*}
		\sum_{n=0}^{\infty}RD_3^{3,27}\left(3^kn+\frac{3^k+13}{2}\right)q^n\equiv0\pmod{3^{k+1}}.
	\end{align*}
\end{corollary}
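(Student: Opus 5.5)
Corollary \ref{coro3} will follow from Theorem \ref{T5} once we know the $3$-adic valuations of the coefficient vectors $y_k$. The plan is to show by induction on $k\ge 3$ that
\begin{align*}
y_{k,i}\equiv 0 \pmod{3^{k+1}} \quad \text{for all } i\ge 1,
\end{align*}
and in fact to carry a sharper, $i$-dependent bound through the induction. Once this holds, every term $y_{k,i}q^{i-1}E_3^{3(4i-2)}/E_1^{3(4i-2)}$ on the right-hand side of Theorem \ref{T5} has integer power-series coefficients times $y_{k,i}$, so the whole sum vanishes modulo $3^{k+1}$.

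\textbf{Valuations of $M$.} Both $A$ and $B$ are built from the entries $m_{i,j}$ of $M$. These entries come from the standard modular equation for $E_3^3/E_1^3$ under the $3$-dissection (the $U_3$-type operator acting on powers of $E_3^3/E_1^3$). The first step is to prove, from the recurrence for $m_{i,j}$ implied by that modular equation, a lower bound of the shape
\begin{align*}
\nu_3(m_{i,j})\ \ge\ \left\lfloor \frac{9j-3i-1}{2}\right\rfloor
\end{align*}
or an analogous linear bound in $j$ and $i$. The displayed rows support this: $m_{1,1}=9$, $m_{2,2}=3^5$, $m_{3,3}=3^8$, $m_{2,1}=6$, $m_{3,1}=1$. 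Substituting $a_{i,j}=m_{4i-2,i+j-1}$ then gives
\begin{align*}
\nu_3(a_{i,j})\ \ge\ c\,(j-1)+1 - (\text{something controlled by } i).
\end{align*}
The key qualitative point is that every column gains at least one power of $3$. The same computation is done for $b_{i,j}=m_{4i-3,i+j-1}$.

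\textbf{Induction.} The base case is $y_3=y_2B$, with $y_2=(3^2,3^7,3^{10})$. Here we check directly that $\nu_3(y_{3,j})\ge 4+\alpha(j-1)$ for a suitable slope $\alpha$. For the inductive step, assume $\nu_3(y_{k,i})\ge k+1+\alpha(i-1)$. Then
\begin{align*}
\nu_3(y_{k+1,j})\ \ge\ \min_i\bigl(\nu_3(y_{k,i})+\nu_3(a_{i,j})\bigr),
\end{align*}
and we must show this minimum is at least $k+2+\alpha(j-1)$. This requires that the growth of $\nu_3(y_{k,i})$ in $i$ compensates for any loss in $\nu_3(a_{i,j})$ as $i$ grows. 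The case $k=1$ of Corollary \ref{coro1} uses the same mechanism with $x_1=(9,0,\dots)$ as the starting vector.

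\textbf{Main obstacle.} The hard part is establishing a uniform valuation bound for all entries $m_{i,j}$, not just the displayed rows, that is strong enough to close the induction with the right $i$-dependence. I would first derive the recurrence $m_{i,j}=\sum_{r} c_r\, m_{i-r,\,j-1}$, whose coefficients $c_r$ come from the cubic modular equation. Its coefficients carry valuations like $3^2, 3^5, 3^8$ in the style of $m_{1,1},m_{2,2},m_{3,3}$. I would then prove the linear valuation bound for $m_{i,j}$ by induction on $i$, following Hirschhorn--Hunt style arguments for Ramanujan's congruences modulo powers of $5$.
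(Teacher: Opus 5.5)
Your plan is the paper's argument. The paper cites Tang's bound $\nu(m_{i,j})\ge\lfloor(9j-3i-1)/2\rfloor$; your induction on $i$ reproduces it, provided you use the actual recurrence $m_{i,j}=27m_{i-1,j-1}+9m_{i-2,j-1}+m_{i-3,j-1}$ coming from the modular equation, whose coefficients are $27,9,1$ and not of size $3^2,3^5,3^8$. From this it deduces $\nu(a_{i,j})\ge\lfloor(9j-3i-4)/2\rfloor$ and $\nu(b_{i,j})\ge\lfloor(9j-3i-1)/2\rfloor$, and takes your slope to be $\alpha=9/2$, i.e.\ $\nu(y_{k,j})\ge k+1+\lfloor(9j-9)/2\rfloor$, with the minimum over $i$ attained at $i=1$. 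Entries of $A$ off the first row can be $3$-adic units (e.g.\ $a_{2,1}=m_{6,2}=1$), so "every column gains a power of $3$" is not literally true; it is this $i$-dependent bound, checked at the base $y_3=y_2B$ using $\nu(y_{2,i})=2,7,10$, that closes the induction.
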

	
	
	\noindent Our article is structured as follows: In Section 2, we present some preliminaries required for our proofs. In Section 3 and Section 4, we follow Hirshhhorn's \cite{hirsh}, H operator approach to prove our results. 
	\section{Preliminaries}
\noindent In this section, we first recall the following Jacobi's identity
\begin{equation*}
	E_1^3 = \sum_{n \geq 0} (-1)^n (2n+1) q^{n(n+1)/2}. 
\end{equation*}

\noindent	Which is equivalent to

\begin{align}\label{ak1}
	E_1^3 &= 1 - 3q + 5q^3 - 7q^6 + 9q^{10} - 11q^{15} + 13q^{21} - 15q^{28} + 17q^{36} - 19q^{45} + 21q^{55} -\nonumber \cdots \\ \nonumber
	&= (1 + 5q^3 - 7q^6 - 11q^{15} + 13q^{21} + 17q^{36} - 19q^{45} + \cdots) \\ \nonumber
	&\quad - 3q(1 - 3q^9 + 5q^{27} - 7q^{54} + \cdots) \\ 
	&= P(q^3) - 3q E_9^3,
\end{align}

\noindent	where
\begin{equation*}
	P(q) = \sum_{n=-\infty}^{\infty} (-1)^n (6n+1) q^{n(3n+1)/2}.
\end{equation*}

\noindent	Let $\omega = e^{2\pi i/3}$. Replacing $q$ by $q, \, \omega q$, and $\omega^2 q$ in \eqref{ak1}, and multiplying the three results, we obtain
\begin{align*}
	P(q^3)^3 - 27q^3 E_9^9
	=& E_(q^3) E(\omega q^3) E(\omega^2 q^3)\\
	=&\prod_{n=0}^{\infty}(1 - q^n)^3 (1 - \omega^n q^n)^3 (1 - \omega^{2n} q^n)^3\\
=& \prod_{\substack{n=0 \\ 3 \nmid n}} ^{\infty}(1 - q^n)^3 (1 - \omega^n q^n)^3 (1 - \omega^{2n} q^n)^3\\
	&\times \prod_{\substack{n=0\\ 3 \mid n}}^{\infty} (1 - q^n)^3 (1 - \omega^n q^n)^3 (1 - \omega^{2n} q^n)^3 \\
	=&\frac{E_3^{12}}{E_9^3}.
\end{align*}
Therefore, we obtain
\begin{equation*}\label{l21}
	P(q)^3=\frac{E_1^{12}}{E_3^3}+27qE_3^9.
\end{equation*}

\noindent	Let 
\begin{equation}\label{w1}
	\zeta=\frac{E_1^3}{qE_9}, \ T=\frac{E_3^{12}}{q^3E_9^{12}}.
\end{equation}
That is 
\begin{equation}\label{w2}
	\zeta=\frac{P(q^3)}{qE_9^3}-3.
\end{equation}
It follows that
\begin{equation}\label{w3}
	\zeta^2=\frac{P(q^3)^2}{q^2E_9^6}-6\frac{P(q^3)}{qE_9^3}+9
\end{equation}
and
\begin{equation}\label{w4}
	\zeta^3 = \frac{P(q^3)^3}{q^3 E_9^9} - 9 \frac{P(q^3)^2}{q^2 E(q^6)^6}
	+ 27 \frac{P(q^3)}{q E_3^3} - 27. 
\end{equation}

\noindent	From \eqref{w1}, \eqref{w2}, \eqref{w3} and \eqref{w4}, we obtain
\begin{equation}\label{a4}
	\zeta^3 + 9\zeta^2 + 27\zeta - T = 0,
\end{equation}

\noindent	which is a modular equation for $\zeta$.  
We now rewrite \eqref{a4} as
\[
\frac{1}{\zeta} = \frac{1}{T}\left(27 + 9\zeta + \zeta^2\right). 
\]

\noindent	It follows that for $i \geq 1$,
\begin{equation}\label{a5}
	\frac{1}{\zeta^i} = \frac{1}{T}\left(\frac{27}{\zeta^{i-1}} + \frac{9}{\zeta^{i-2}} + \frac{1}{\zeta^{i-3}}\right).
\end{equation}

\noindent	Define the Huffing operator $H_3$ as follows
\[
H_3\!\left(\sum_n a(n) q^n\right) = \sum_n a(3n) q^{3n}.
\]

\noindent	Clearly, $H_3(1) = 1$.  
Applying $H_3$ on \eqref{a5}, we obtain
\[
H_3\!\left(\frac{1}{\zeta^i}\right) 
= \frac{1}{T}\left( 27H_3\!\left(\frac{1}{\zeta^{i-1}}\right)
+ 9H_3\!\left(\frac{1}{\zeta^{i-2}}\right)
+ H_3\!\left(\frac{1}{\zeta^{i-3}}\right)\right). 
\]

\noindent	In particular,
\begin{align*}
		H_3\!\left(\frac{1}{\zeta}\right) =& \frac{1}{T}\left(27 + 9 \times (-3) + 9\right) 
	= \frac{3^2}{T},\\
	H_3\!\left(\frac{1}{\zeta^2}\right) 
	=& \frac{1}{T}\left(27 \times \frac{9}{T} + 9 \times 1 - 3\right) 
	= \frac{2\cdot 3}{T} + \frac{3^5}{T^2}, \\
	H_3\!\left(\frac{1}{\zeta^3}\right) 
	=& \frac{1}{T}\left(27\!\left(\frac{6}{T} + \frac{3^5}{T^2}\right) + 9\!\left(\frac{9}{T}\right) + 1\right) 
	= \frac{1}{T} + \frac{3^5}{T^2} + \frac{3^8}{T^3}.
\end{align*}
\noindent Now, for $i \geq 1$, we can write
\begin{align}
	H_3\!\left(\frac{1}{\zeta^i}\right)
	= \sum_{j=1}^{i} \frac{m_{i,j}}{T^j}.\label{delta 3}
\end{align}

\noindent	Where $M=(m_{i,j})_{i,j\geq1}$ form a matrix, the first nine rows of which are
\[
M =
\begin{pmatrix}
	3^{2} & 0 & 0 & 0 & 0 & 0 & 0 & \cdots \\
	
	2\cdot 3 & 3^{5} & 0 & 0 & 0 & 0 & 0 & \cdots \\
	
	1 & 3^{5} & 3^{8} & 0 & 0 & 0 & 0 & \cdots \\
	
	0 & 2\cdot 3^{2}\cdot 5 & 2^{2}\cdot 3^{7} & 3^{11} & 0 & 0 & 0 & \cdots \\
	
	0 & 3\cdot 5 & 2^{2}\cdot 3^{5}\cdot 5 & 3^{10}\cdot 5 & 3^{14} & 0 & 0 & \cdots \\
	
	0 & 1 & 2\cdot 3^{6} & 3^{9}\cdot 11 & 2\cdot 3^{14} & 3^{17} & 0 & \cdots \\
	
	0 & 0 & 2^{2}\cdot 3^{2}\cdot 7 & 2\cdot 3^{8}\cdot 7 & 3^{11}\cdot 7^{2} & 3^{16}\cdot 7 & 3^{20} & \cdots \\
	
	0 & 0 & 2^{3}\cdot 3 & 2\cdot 3^{6}\cdot 17 & 2^{4}\cdot 3^{10}\cdot 5 & 2^{2}\cdot 3^{14}\cdot 17 & 2^{3}\cdot 3^{19} & \cdots \\
	
	0 & 0 & 1 & 2\cdot 3^{7} & 3^{10}\cdot 29 & 3^{16}\cdot 5 & 2\cdot 3^{19}\cdot 5 & \cdots \\
	
	\vdots & \vdots & \vdots & \vdots & \vdots & \vdots & \vdots & \ddots
\end{pmatrix}.\]
It is easy to observe, $m_{i,j}=0$, for all $i>3j$ or $j>i$. Hence, we have the following lemma
\begin{lemma}
	For integers $i,j \geq 1$, we have
	\begin{equation}\label{L11}
		H_{3}\!\left(q^{i+1}\frac{E_{3}^{12i-6}}{E_{1}^{12i-6}}\right)=
\sum_{j=1}^{3i-1} a_{i,j}\,
		q^{3j}\frac{E_{9}^{12j-6}}{E_{3}^{12j-6}},
	\end{equation}
	and
	\begin{align}\label{L12}
			H_{3}\!\left(q^{i-2}\frac{E_{3}^{12i-6}}{E_{1}^{12i-6}}\right)=
		\sum_{j=1}^{3i-1} a_{i,j}\,
		q^{3j-3}\frac{E_{9}^{12j-6}}{E_{3}^{12j-6}},
	\end{align}
	where
	\begin{equation*}
		a_{i,j}=m_{4i-2,i+j-1}.
	\end{equation*}
\end{lemma}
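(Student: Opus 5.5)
The plan is to rewrite the argument of $H_3$ as a function of $q^3$ times a power of $1/\zeta$, and then apply \eqref{delta 3}. Since $H_3$ is linear and $H_3(f(q^3)g(q))=f(q^3)H_3(g(q))$, such factors pull out of $H_3$. I will use $\zeta=E_1^3/(qE_9^3)$, which is what \eqref{w2} together with \eqref{ak1} actually gives: $P(q^3)/(qE_9^3)-3=E_1^3/(qE_9^3)$. The exponent on $E_9$ in \eqref{w1} appears to be a misprint. I also use $T=E_3^{12}/(q^3E_9^{12})$, which is a function of $q^3$.

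\textbf{Step 1 (rewriting).} Put $a=12i-6=3(4i-2)$. Then
\[
\frac{1}{\zeta^{4i-2}}=q^{4i-2}\frac{E_9^{12i-6}}{E_1^{12i-6}},
\]
and hence
\[
q^{i+1}\frac{E_3^{12i-6}}{E_1^{12i-6}}=q^{-3i+3}\left(\frac{E_3}{E_9}\right)^{12i-6}\frac{1}{\zeta^{4i-2}}.
\]
The prefactor $q^{-3(i-1)}(E_3/E_9)^{12i-6}$ is a function of $q^3$. Applying $H_3$ and \eqref{delta 3} with index $4i-2$, and using $T^{-j}=q^{3j}E_9^{12j}/E_3^{12j}$, gives
\[
H_3\!\left(q^{i+1}\frac{E_3^{12i-6}}{E_1^{12i-6}}\right)=\sum_{j=1}^{4i-2} m_{4i-2,j}\,q^{3(j-i+1)}\frac{E_9^{12j-12i+6}}{E_3^{12j-12i+6}}.
\]

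\textbf{Step 2 (reindexing).} Substitute $j=i+j'-1$. The general term becomes
\[
m_{4i-2,\,i+j'-1}\,q^{3j'}\frac{E_9^{12j'-6}}{E_3^{12j'-6}}=a_{i,j'}\,q^{3j'}\frac{E_9^{12j'-6}}{E_3^{12j'-6}},
\]
and $j'$ now runs from $2-i$ to $3i-1$.

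\textbf{Step 3 (discarding terms).} The terms with $j'\le 0$, i.e. $j\le i-1$, vanish. This uses the vanishing pattern $m_{r,j}=0$ for $r>3j$ noted before the lemma: here $3j\le 3i-3<4i-2$. This gives \eqref{L11}. This vanishing is the one step needing justification beyond the displayed matrix, and I regard it as the main obstacle. I would prove it by induction on $r$ from the recurrence obtained by applying $H_3$ to \eqref{a5}. The claim is that $H_3(\zeta^{-r})$ is a polynomial in $1/T$ whose lowest-order term has degree at least $\lceil r/3\rceil$. The recurrence multiplies by $1/T$ while lowering $r$ by at most $3$, so the claim propagates. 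The base cases $r=0,1,2$ hold by the computations already given; $r=0$ needs $H_3(1)=1$, and also $H_3(\zeta)=-3$ and $H_3(\zeta^2)=9$, which were used there. The companion claim $m_{r,j}=0$ for $j>r$ follows from the same induction, since each step raises the top degree by at most one.

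\textbf{Step 4 (the second identity).} For \eqref{L12}, note that $q^{i-2}=q^{-3}\cdot q^{i+1}$ and $q^{-3}$ is a function of $q^3$. So applying $H_3$ and using \eqref{L11} simply multiplies the right-hand side by $q^{-3}$. This turns $q^{3j}$ into $q^{3j-3}$ and gives \eqref{L12}.
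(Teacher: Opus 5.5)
Your proof is correct and follows the paper's argument. You write the argument of $H_3$ as a function of $q^3$ times $\zeta^{-(4i-2)}$, apply \eqref{delta 3}, reindex by $j\mapsto i+j-1$, discard the vanishing terms, and obtain \eqref{L12} by pulling out $q^{-3}$. Your version is slightly more careful than the paper's in two places. First, you justify $m_{r,j}=0$ for $r>3j$ by induction on the recurrence, whereas the paper only reads it off the table. Second, you correctly identify the discarded terms as $j\le i-1$, whereas the paper's claim that $m_{4i-2,j}=0$ for $1\le j\le i$ fails at $i=1$, where $m_{2,1}=6$.
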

\begin{proof}
Note for $1\leq j \leq i$, $m_{4i-2,j}=0$, therefore, we have
\noindent
\begin{align*}\label{a15}
	H_3\!\left(\frac{1}{\zeta^{4i-2}}\right)
	= \sum_{j= i+1}^{4i-2} \frac{m_{4i-2,\,j}}{T^{j}}
	= \sum_{j= 1}^{3i-1} \frac{m_{4i-2,\,i+j-1}}{T^{i+j-1}},
\end{align*}
which can be rewritten as
\begin{equation*}
	H_{3}\!\left(q^{4i-2}\frac{E_{9}^{12i-6}}{E_{1}^{12i-6}}\right)
	=
	\sum_{j=1}^{3i-1}
	m_{4i-2,i+j-1}\,
	q^{3i+3j-3}
	\frac{E_{9}^{12i+12j-12}}{E_{3}^{12i+12j-12}},
\end{equation*}
that gives
\begin{equation*}
H_{3}\!\left(q^{i+1}\frac{E_{3}^{12i-6}}{E_{1}^{12i-6}}\right)
=
\sum_{j=1}^{3i-1} a_{i,j}\,
q^{3j}\frac{E_{9}^{12j-6}}{E_{3}^{12j-6}},
\end{equation*}
where
\[
a_{i,j}=m_{4i-2,i+j-1},
\]
this completes the proof of (\ref{L11}). And (\ref{L12}) follows immediately from identity (\ref{L11}).
\end{proof}
	\section{Proof of Theorems }
	

	\begin{proof}[\it\textbf{Proof of Theorem \ref{T1}}]
		The identity (\ref{I1.2}) is the k=0 case of Theorem \ref{T1}.
		Suppose, Theorem \ref{T1} holds for some $k\geq0$. Then
			\begin{align*}
			\sum_{n=0}^{\infty}RD_3^{3,3}\left(3^kn+\frac{3^k+1}{2}\right)q^n=\sum_{i=1}^{\frac{3^{k-1}+1}{2}}x_{k,i}q^{i-1}\frac{E_3^{3(4i-2)}}{E_1^{3(4i-2)}},
		\end{align*}
		which is equivalent to
		\begin{align}
			\sum_{n=0}^{\infty}RD_3^{3,3}\left(3^kn+\frac{3^k+1}{2}\right)q^{n+2}=\sum_{i=1}^{\frac{3^{k-1}+1}{2}}x_{k,i}q^{i+1}\frac{E_3^{3(4i-2)}}{E_1^{3(4i-2)}}.\label{3.1}
		\end{align}
		Applying the $H_3$ operator to (\ref{3.1}) and then using (\ref{L11}), we obtain
		\begin{align*}
			\sum_{m=1}^{\infty}RD_3^{3,3}\left(3^k(3m-2)+\frac{3^k+1}{2}\right)q^{3m}=&\sum_{i=1}^{\frac{3^{k-1}+1}{2}}x_{k,i}H_3\left(\ q^{i+1}\frac{E_3^{3(4i-2)}}{E_1^{3(4i-2)}}\right)\\
			=&\sum_{i=1}^{\frac{3^{k-1}+1}{2}}x_{k,i} \sum_{j=1}^{3i-1} a_{i,j}
			q^{3j} \frac{E_9^{3(4j-2)}}{E_3^{3(4j-2)}}\\
			=& \sum_{j=1}^{\frac{3^k+1}{2}}
			\left(\sum_{i=1}^{\frac{3^{k-1}+1}{2}}x_{k,i}a_{i,j}\right)	q^{3j} \frac{E_9^{3(4j-2)}}{E_3^{3(4j-2)}}\\
			=&\sum_{j=1}^{\frac{3^k+1}{2}}x_{k+1,j} q^{3j}\frac{E_9^{3(4j-2)}}{E_3^{3(4j-2)}}.\\
				\end{align*}
		
		We now replace $q^3$ by $q$ and then set $m=n+1$, with $n \geq 0$, in the above identitity to obtain
			\begin{align*}
			\sum_{n=0}^{\infty}RD_3^{3,3}\left(3^{k+1}n+\frac{3^{k+1}+1}{2}\right)q^{n+1}=\sum_{j=1}^{\frac{3^{k}+1}{2}}x_{{k+1},j}q^{j}\frac{E_3^{3(4j-2)}}{E_1^{3(4j-2)}}.	
		\end{align*}
		On rearranging the above, we finally obtain
		\begin{align*}
			\sum_{n=0}^{\infty}RD_3^{3,3}\left(3^{k+1}n+\frac{3^{k+1}+1}{2}\right)q^n=\sum_{j=1}^{\frac{3^{k}+1}{2}}x_{{k+1},j}q^{j-1}\frac{E_3^{3(4j-2)}}{E_1^{3(4j-2)}},
		\end{align*}
		which is $k+1$ case of Theorem \ref{T1}. This completes the proof of Theorem \ref{T1} by induction.
	\end{proof}
	\begin{proof}[\it\textbf{Proof of Theorem \ref{T5}}]
		We have
		\begin{align*}
			\sum_{n=0}^{\infty}RD_3^{3,27}\left(n\right)q^{n+1}=q\left(\frac{E_9}{E_1}\right)^3\left(\frac{{E_3}{E_{27}}}{{E_9}{E_{81}}}\right)^3=\frac{1}{\delta}\left(\frac{{E_3}{E_{27}}}{{E_9}{E_{81}}}\right)^3
		\end{align*}
		Applying $H_3$ to the above, we find that
		\begin{align*}
			\sum_{m=1}^{\infty}RD_3^{3,27}(3m-1)q^{3m}
			= H_{3}\!\left(\frac{1}{\zeta}\right)\cdot\left(\frac{{E_3}{E_{27}}}{{E_9}{E_{81}}}\right)^3
			= 9q^{3}\frac{E_{9}^{12}}{E_{3}^{12}}\left(\frac{{E_3}{E_{27}}}{{E_9}{E_{81}}}\right)^3.
		\end{align*}
		Since \(m \geq 1\), replacing \(m\) by \(n+1\), with \(n \geq 0\), and then replacing \(q^3\) by \(q\), we have (\ref{T3.1}).
		Again,\\
		\begin{align*}
			\sum_{n=0}^{\infty}RD_3^{3,27}\left(3n+2\right)q^{n+3}=9q^3\left(\frac{E_9}{E_1}\right)^9\left(\frac{{E_3}^9}{{E_9}^6{E_{27}}^3}\right)=9\frac{1}{\delta^3}\left(\frac{{E_3}^9}{{E_9}^6{E_{27}}^3}\right).
		\end{align*}
			Applying $H_3$ and then substituting (\ref{delta 3}), we have
			\begin{align*}
				\sum_{m=1}^{\infty}RD_3^{3,27}\left(3^2m-7\right)q^{3m}=9\sum_{j=1}^{3} m_{3,j}q^{3j}\left(\frac{E_9}{E_3}\right)^{12j-9}\left(\frac{E_9}{E_{27}}\right)^3.
			\end{align*}
				Since \(m \geq 1\), replacing \(m\) by \(n+1\), with \(n \geq 0\), and then replacing \(q^3\) by \(q\), we have (\ref{T3.2}).
				From identity (\ref{T3.2}), we obtain
			\begin{align*}
				\sum_{n=0}^{\infty}RD_3^{3,27}\left(9n+2\right)q^{n+7}=\sum_{i=1}^{3}y_{2,i}q^{3(3-i)}\frac{E_3^{3(4i-2)}}{E_9^{3(4i-2)}}\left(q\frac{E_9^3}{E_1^3}\right)^{4i-3}.
			\end{align*}	
		Proceeding in the same manner as above, we obtain
			\begin{align*}
				\sum_{n=0}^{\infty}RD_3^{3,27}\left(3^3n+20\right)q^n=\sum_{i=1}^{7}y_{3,i}q^{i-1}\frac{E_3^{3(4i-2)}}{E_1^{3(4i-2)}},
			\end{align*}
	which is the $k=3$ case of Theorem \ref{T5}.
Assume that Theorem \ref{T5} holds for some $k\geq3$. Then
	\begin{align*}
	\sum_{n=0}^{\infty}RD_3^{3,27}\left(3^kn+\frac{3^k+13}{2}\right)q^n=\sum_{i=1}^{\frac{3^{k-3}\cdot13+1}{2}}y_{k,i}q^{i-1}\frac{E_3^{3(4i-2)}}{E_1^{3(4i-2)}},
\end{align*}
which is equivalent to
	\begin{align}\label{n-1}
	\sum_{n=0}^{\infty}RD_3^{3,27}\left(3^kn+\frac{3^k+13}{2}\right)q^{n-1}=\sum_{i=1}^{\frac{3^{k-3}\cdot13+1}{2}}y_{k,i}q^{i-2}\frac{E_3^{3(4i-2)}}{E_1^{3(4i-2)}}.
\end{align}
Applying the $H_3$ operator to (\ref{n-1}) and then applying (\ref{L12}), we obtain
\begin{align*}
	\sum_{m=0}^{\infty}RD_3^{3,27}\left(3^k(3m+1)+\frac{3^k+13}{2}\right)q^{3m}=&\sum_{i=1}^{\frac{3^{k-3}\cdot 13+1}{2}}y_{k,i}H_3\left(\ q^{i-2}\frac{E_3^{3(4i-2)}}{E_1^{3(4i-2)}}\right)\\
=&\sum_{i=1}^{\frac{3^{k-3}\cdot 13+1}{2}}y_{k,i}\sum_{j=1}^{3i-1} a_{i,j}\,
q^{3j-3}\frac{E_{9}^{12j-6}}{E_{3}^{12j-6}}\\
	=&\sum_{j=1}^{\frac{3^{k-2}\cdot13+1}{2}}\left(\sum_{i=1}^{\frac{3^{k-3}\cdot13+1}{2}}y_{k,i}a_{i,j}\right)q^{3j-3}\frac{E_{9}^{12j-6}}{E_{3}^{12j-6}}\\
	=&\sum_{j=1}^{\frac{3^{k-2}\cdot13+1}{2}}y_{{k+1},i}\cdot q^{3j-3}\frac{E_{9}^{3({4j-2})}}{E_{3}^{3({4j-2})}}.
\end{align*}
We now replace $q^3$ by $q$ and then setting $m=n$, with $n \geq 0$, in the above identity to finally obtain
\begin{align*}
	\sum_{n=0}^{\infty}RD_3^{3,27}\left(3^{k+1}n+\frac{3^{k+1}+13}{2}\right)q^{n}=\sum_{j=1}^{\frac{3^{k-2}\cdot13+1}{2}}y_{{k+1},j}q^{j-1}\frac{E_3^{3(4j-2)}}{E_1^{3(4j-2)}},
\end{align*}
which is $k+1$ case of Theorem \ref{T5}. This completes the proof of Theorem \ref{T5} by induction.
	\end{proof}
	\section{Proof of the congruences}
	From \cite[Lemma (3.2)]{tang}, we recall that
	\[\nu(m_{i,j})\ge\left\lfloor\frac{9j-3i-1}{2}\right\rfloor,\]
	and so,
	\[\nu(a_{i,j})=\nu(m_{4i-2,i+j-1})\ge\left\lfloor\frac{9j-3i-4}{2}\right\rfloor.\]
	It is not hard to show that
	\begin{align}
		\nu(x_{{k+1},j})
			&\ge k+2+\left\lfloor\frac{9j-9}{2}\right\rfloor,\label{c1}
	\end{align}
	The identity (\ref{c1}) is true for \(k=0\), by (\ref{x1}).
	Suppose (\ref{c1}) is true for some \(k\ge0\). Then
	\begin{align*}
		\nu(x_{k+2,j})
		&\ge \min_{i\ge1}\left(\nu(x_{k+1,i})+\nu(a_{i,j})\right)\\
		&=\nu(x_{k+1,1})+\nu(a_{1,j})\\
		&\ge k+2+\left\lfloor\frac{9j-7}{2}\right\rfloor\\
			&\ge k+3+\left\lfloor\frac{9j-9}{2}\right\rfloor.
			\end{align*}
	which is the \((k+1)\) case of (\ref{c1}). This completes the proof of (\ref{c1}) by induction.
Proceeding in a similar manner, we can prove that
		\begin{align}
		\nu(y_{{k+1},j})
		&\ge k+2+\left\lfloor\frac{9j-9}{2}\right\rfloor,\label{c2}
	\end{align}
	The Corollary \ref{coro1}  and \ref{coro3} follows from Theorem \ref{T1} and \ref{T5},  respectively, together with (\ref{c1}) and (\ref{c2}).
	
	\bigskip
	
	\bigskip
	
	\noindent\textsuperscript{1,2}Ramanujan School of Mathematical Sciences,\\Department of Mathematics,\\ Pondicherry University,\\ Puducherry - 605014, India. \\
	
	\noindent Email: \texttt{anjelinvallialil@pondiuni.ac.in} \\
	Email: \texttt{dr.fathima.sn@pondiuni.ac.in}  (\Letter)
\end{document}